\newtheorem{thm}{Theorem}
\newtheorem{lem}{Lemma}
\newcommand*{\ev}{\ensuremath{\mathbf{E}}}
\newcommand*{\law}{\ensuremath{\overset{\mathsf{d}}{=}}}
\newcommand*{\pr}{\ensuremath{\mathbf{P}}}
\newcommand*{\xis}{\ensuremath{1 + \xi_1 + \xi_1 \xi_2 + \cdots}}
\newcommand*{\acoef}[1]{\ensuremath{a_{j_1 \dots j_{#1-1}}^{(#1)}}}
\begin{document}


\title{Moments of an exponential functional of random walks and
permutations with given descent sets}

\author{Tam\'as Szabados\footnote{Corresponding author, address:
Department of Mathematics, Budapest University of Technology and
Economics, M\H{u}egyetem rkp 3, H \'ep V em, Budapest, 1521,
Hungary, e-mail: szabados@math.bme.hu} \footnote{Research
supported by the French--Hungarian intergovernmental grant
``Balaton'' F-39/2000.} and Bal\'azs Sz\'ekely\footnote{Research
supported by the HSN laboratory of BUTE.} \\
Budapest University of Technology and Economics}

\date{}

\maketitle

\begin{center}
Running head: \emph{Moments and permutations}
\end{center}

\begin{abstract}
The exponential functional of simple, symmetric random walks with
negative drift is an infinite polynomial $Y = 1 + \xi_1 + \xi_1
\xi_2 + \xi_1 \xi_2 \xi_3 + \cdots$ of independent and identically
distributed non-negative random variables. It has moments that are
rational functions of the variables $\mu_k = \ev(\xi^k) < 1$ with
universal coefficients. It turns out that such a coefficient is
equal to the number of permutations with descent set defined by
the multiindex of the coefficient. A recursion enumerates all
numbers of permutations with given descent sets in the form of a
Pascal-type triangle.
\end{abstract}

\renewcommand{\thefootnote}{\alph{footnote}}
\footnotetext{ 2000 \emph{MSC.} Primary 05A05, 05A19. Secondary
60C05.} \footnotetext{\emph{Key words and phrases.} Random walk,
exponential functional, permutations with given descent sets,
Pascal's triangle, infinite polynomials of random variables.}

\section{Introduction}

The present work was induced by the following problem from
probability theory, cf. \cite{expfun}. Let $(X_j)_{j=1}^{\infty}$
be a sequence of independent and identically distributed random
variables with $\pr(X_j = \pm 1) = \frac12$. Further, let $S_0=0$,
$S_k = \sum_{j=1}^k X_j$ $(k \ge 1)$ be a simple, symmetric random
walk. Introduce the following exponential functional $Y$ of the
random walk with negative drift:
\begin{equation}\label{eq:Y}
Y = \sum_{k=0}^{\infty} \exp(S_k - k \nu ) = \xis , \qquad
\xi_j = \exp(X_j - \nu) ,
\end{equation}
where $\nu > 0$. $Y$ is an approximation of a widely investigated
exponential functional of Brownian motion, important for example
in studying Asian options of financial mathematics. To generalize
it somewhat, let $(\xi_j)_{j=1}^{\infty}$ be independent and
identically distributed random variables, $\xi_j \ge 0$. Consider
first the finite polynomials
\begin{eqnarray*}
Y_n &=& 1 + \xi_1 + \xi_1 \xi_2 + \cdots + \xi_1 \cdots \xi_n\\
&=& 1 + \xi_1(1 + \xi_2 + \xi_2 \xi_3 + \cdots + \xi_2 \cdots \xi_n)
\qquad (n \ge 1) .
\end{eqnarray*}
This equation implies the following equality in distribution (denoted
by $\law$): $Y_n \law 1 + \xi Y_{n-1}$, where $\xi \law \xi_1$ and $\xi$
is independent of $Y_{n-1}$. Since
$Y_n \nearrow Y = 1 + \xi_1 + \xi_1 \xi_2 + \xi_1 \xi_2 \xi_3 +
\cdots$ as $n \to \infty$, in the limit we get a stochastic difference
equation for the infinite polynomial $Y$:
\[
Y \law 1 + \xi Y ,
\]
where $\xi$ is independent of $Y$.

Then using the binomial theorem, the following recursion
can be obtained for the $p$th moment $e_p = \ev(Y^p)$ of $Y$ for any
positive integer $p$:
\begin{equation}\label{eq:moments}
e_p = \frac{1}{1-\mu_p} \sum_{k=0}^{p-1} \binom{p}{k} \mu_k \: e_k ,
\end{equation}
supposing $\mu_p < 1$, where $\mu_k = \ev(\xi^k)$, $e_k
= \ev(Y^k)$, $k \ge 0$. We mention that $\ev(Y^p) < \infty$ if
and only if $\mu_p < 1$, and then $\mu_k < 1$ for any $1 \le k < p$
as well, cf. \cite[Theorem 2]{expfun}. Observe that
(\ref{eq:moments}) defines a recursive sequence $e_p = e_p(\mu_1,
\dots, \mu_p)$ for $p \ge 1$ with $e_0 =
1$, irrespective of any probability theory background. In this
recursion the $\mu_k$'s may be considered as variables that may take
any value except 1 for $k \ge 1$. Thus from now on we always suppose
that $\mu_k \ne 1$ for $k\ge 1$ and $\mu_0=1$.

It follows from (\ref{eq:moments}) by induction
that $e_p$ is a rational function of the variables $\mu_1, \dots,
\mu_p$ for any integer $p \ge 1$:
\begin{equation} \label{eq:fraction}
e_p = \frac{1}{(1-\mu_1) \cdots (1-\mu_p)} \sum_{(j_1, \dots,
j_{p-1}) \in \{0, 1\}^{p-1}} \acoef{p} \mu_1^{j_1} \cdots
\mu_{p-1}^{j_{p-1}} ,
\end{equation}
where the coefficients of the numerator are \emph{universal} constants,
being independent of the parameters in the recursion.

These coefficients $\acoef{p}$ make a symmetrical, Pascal's
triangle-like table if each row is listed in the increasing order
of the binary numbers $j_{p-1}2^{p-2} + \cdots + j_1 2^0$, defined
by the multiindices $(j_1, \dots , j_{p-1})$, see the rows $p = 1,
\dots , 5$ in Table 1. 
\begin{table}[h]
\caption{The Pascal's triangle-like table of the coefficients}
\[
\begin{array}{ccccccccccccccccc}
&&&&&&&&1&&&&&&&& \\
&&&&&&&\overset{0}{1}&&\overset{1}{1}&&&&&&& \\
&&&&&&\overset{00}{1}&\overset{01}{2}&&\overset{10}{2}
&\overset{11}{1}&&&&&& \\
&&&&\overset{000}{1}&\overset{001}{3}&\overset{010}{5}
&\overset{011}{3}&&\overset{100}{3}&\overset{101}{5}
&\overset{110}{3}&\overset{111}{1}&&&& \\
\overset{0000}{1}&\overset{0001}{4}&\overset{0010}{9}
&\overset{0011}{6}&\overset{0100}{9}&\overset{0101}{16}
&\overset{0110}{11}&\overset{0111}{4}
&&\overset{1000}{4}&\overset{1001}{11}&\overset{1010}{16}
&\overset{1011}{9}&\overset{1100}{6}&\overset{1101}{9}
&\overset{1110}{4}&\overset{1111}{1} \\
\end{array}
\]
\end{table}

Two natural questions may arise at this point, independently of
any probability theory background mentioned above. First, suppose
that one defines a recursive sequence $(e_p)_{p=1}^{\infty}$ by
(\ref{eq:moments}) with coefficients $\acoef{p}$ given by
(\ref{eq:fraction}). Can one attach any \emph{direct} mathematical
meaning to these coefficients $\acoef{p}$ then? The answer is yes,
and rather surprisingly (as was conjectured in \cite{expfun}), the
coefficient $\acoef{p}$ is equal to the number of permutations
$\pi \in S_p$ having descent $\pi(i) > \pi(i+1)$ exactly where
$j_i=1$, $1 \le i \le p-1$, cf. Theorem \ref{th:descent} below.

Second, can one give a direct way to evaluate the coefficients
$\acoef{p}$? The affirmative answer to this question is partly
included in the previous answer, since several formulae are known for
the number of permutations with given descent sets. However, an
apparently new recursion was conjectured in \cite{expfun}, which is
analogous to the recursion of binomial coefficients in the ordinary
Pascal's triangle. The proof of this is the content of Lemma
\ref{le:newrec} below.

\section{The results}

In the next lemma we establish a direct recursion for the
coefficients $\acoef{p}$.

\begin{lem} \label{le:firstrec}
Fix a multiindex $(j_1, \dots ,j_{p-1}) \in \{0, 1\}^{p-1}$. Let $S$
be the set of indices $k$ where $j_k = 1$:
\begin{equation} \label{eq:descset}
S = \{s_1, \dots, s_m\} = \{k: j_k = 1, 1 \le k \le p-1\}, \quad
m = \sum_{k=1}^{p-1} j_k .
\end{equation}
Then the coefficient $\acoef{p}$ defined by (\ref{eq:fraction})
can be obtained by the recursion
\begin{eqnarray}
\acoef{p} &=& \sum_{k=0}^{p-1} \binom{p}{k} j_k (-1)^{j_{k+1} + \cdots
+ j_{p-1}} \acoef{k} \nonumber \\
&=&  \sum_{l = 0}^m \binom{p}{s_l} (-1)^{m-l} \acoef{s_l},
\label{eq:firstrec}
\end{eqnarray}
where, by definition, $a^{(0)} = 1$, $j_0 = 1$, $s_0 = 0$ and $-1$
powered to an empty sum is $1$.
\end{lem}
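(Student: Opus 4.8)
The plan is to substitute the explicit form (\ref{eq:fraction}) of $e_p$ into the recursion (\ref{eq:moments}), clear denominators, and compare coefficients of monomials. Denote by $P_k = P_k(\mu_1,\dots,\mu_{k-1})$ the numerator in (\ref{eq:fraction}), so that
\[
  e_k = \frac{P_k}{(1-\mu_1)\cdots(1-\mu_k)},
\]
where $P_k$ is multiaffine and only the variables $\mu_1,\dots,\mu_{k-1}$ occur in it, the coefficient of $\mu_1^{i_1}\cdots\mu_{k-1}^{i_{k-1}}$ in $P_k$ being $a_{i_1\dots i_{k-1}}^{(k)}$; we adopt the conventions $P_0 = 1$ (forced by $e_0 = 1$) and $\mu_0 = 1$. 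Inserting these expressions into (\ref{eq:moments}) and multiplying through by $(1-\mu_1)\cdots(1-\mu_p)$ turns the rational identity into the polynomial identity
\[
  P_p = \sum_{k=0}^{p-1} \binom{p}{k}\, \mu_k\, (1-\mu_{k+1})(1-\mu_{k+2})\cdots(1-\mu_{p-1})\, P_k ,
\]
which is legitimate because the two sides agree for all admissible values of the $\mu$'s; for $k = 0$ the corresponding summand is just $(1-\mu_1)\cdots(1-\mu_{p-1})$.

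Next I would read off the coefficient of the monomial $\mu_1^{j_1}\cdots\mu_{p-1}^{j_{p-1}}$ on both sides. On the left it is $\acoef{p}$ by definition. On the right, consider the $k$-th summand and observe that its three factors involve pairwise disjoint sets of variables: $P_k$ carries $\mu_1,\dots,\mu_{k-1}$; the explicit factor $\mu_k$ carries $\mu_k$ to the first power only (it does not occur in $P_k$); and $(1-\mu_{k+1})\cdots(1-\mu_{p-1})$ carries $\mu_{k+1},\dots,\mu_{p-1}$. Consequently the chosen monomial can occur in the $k$-th summand only when $j_k = 1$, and then its coefficient factors as a product: $\binom{p}{k}$, times the coefficient of $\mu_1^{j_1}\cdots\mu_{k-1}^{j_{k-1}}$ in $P_k$, which is $\acoef{k}$, times the coefficient of $\mu_{k+1}^{j_{k+1}}\cdots\mu_{p-1}^{j_{p-1}}$ in $(1-\mu_{k+1})\cdots(1-\mu_{p-1})$, which equals $(-1)^{j_{k+1}+\cdots+j_{p-1}}$. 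Summing over $k = 0,\dots,p-1$ — the $k=0$ boundary term being covered by the conventions $j_0 = 1$, $a^{(0)} = 1$ — yields precisely the first line of (\ref{eq:firstrec}).

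The second line of (\ref{eq:firstrec}) is then a mere transcription in terms of the descent set $S = \{s_1,\dots,s_m\}$ of (\ref{eq:descset}): the factor $j_k$ annihilates every term with $k \notin \{0\}\cup S$, so only the indices $k = s_l$ for $l = 0,1,\dots,m$ (with $s_0 = 0$) contribute; and for $k = s_l$ the exponent $j_{s_l+1}+\cdots+j_{p-1}$ is the number of elements of $S$ exceeding $s_l$, namely $m - l$, whence $(-1)^{j_{s_l+1}+\cdots+j_{p-1}} = (-1)^{m-l}$.

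There is no genuine difficulty here; the argument is essentially bookkeeping. The one point requiring care is the disjointness of the variable sets attached to the three factors of each summand, since this is exactly what makes the coefficient of a monomial split into the product displayed above; alongside it, the degenerate index $k = 0$ and the empty-product and empty-sum conventions must be tracked so that the boundary terms match. It is also worth stating explicitly that passing from the rational identities (\ref{eq:moments})–(\ref{eq:fraction}) to the cleared-denominator polynomial identity — and hence to coefficient comparison — is valid because equality of rational functions at all admissible parameter values forces equality of the numerator polynomials once the common denominator $(1-\mu_1)\cdots(1-\mu_p)$ is removed.
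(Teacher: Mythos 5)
Your proposal is correct and follows essentially the same route as the paper: substitute (\ref{eq:fraction}) into (\ref{eq:moments}), clear the common denominator, and extract the coefficient of $\mu_1^{j_1}\cdots\mu_{p-1}^{j_{p-1}}$, with the second line of (\ref{eq:firstrec}) obtained as a direct rewriting via the descent set. Your explicit remarks on the disjointness of the variable sets in each summand and on the validity of coefficient comparison are sound elaborations of steps the paper leaves implicit.
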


\begin{proof}
The second equality in (\ref{eq:firstrec}) is a direct consequence of
the definitions above. To show the first equality, substitute
(\ref{eq:fraction}) into (\ref{eq:moments}):
\[
e_p = \frac{1}{1-\mu_p} \sum_{k=0}^{p-1}
\binom{p}{k} \frac{\mu_k}{(1-\mu_1) \cdots (1-\mu_k)}
\sum_{(j_1, \dots, j_{k-1}) \in \{0,1\}^{k-1}} \acoef{k}
\mu_1^{j_1} \cdots \mu_{k-1}^{j_{k-1}} .
\]
Here, multiplying by the common denominator and then collecting the
coefficients of $\mu_1^{j_1} \cdots \mu_{p-1}^{j_{p-1}}$ for each
$(j_1, \dots, j_{p-1})$ we obtain
\begin{eqnarray*}
\lefteqn{e_p (1-\mu_1) \cdots (1-\mu_p)} \\
&=& \sum_{k=0}^{p-1} \sum_{(j_1, \dots,
j_{k-1}) \in \{0,1\}^{k-1}} \binom{p}{k} \acoef{k}
\mu_1^{j_1} \cdots \mu_{k-1}^{j_{k-1}} \mu_k
(1-\mu_{k+1}) \cdots (1-\mu_{p-1}) \\
&=& \sum_{(j_1, \dots, j_{p-1}) \in \{0,1\}^{p-1}} \mu_1^{j_1}
\cdots \mu_{p-1}^{j_{p-1}} \sum_{k=0}^{p-1} \binom{p}{k} \acoef{k}
j_k (-1)^{j_{k+1}} \cdots (-1)^{j_{p-1}} .
\end{eqnarray*}
This and (\ref{eq:fraction}) imply the first equality in
(\ref{eq:firstrec}).
\end{proof}

Now we turn to the proof of the equality of the coefficient
$\acoef{p}$ given by (\ref{eq:fraction}) and the number of
permutations $b^{(p)}(S)$ with descent set $S$ given by
(\ref{eq:descset}).  The descent set of a permutation $\pi \in S_p$
is defined as $D(\pi)=\{i : \pi(i) > \pi(i+1), 1 \le i \le p-1 \}$.
It is known, cf. \cite[p. 69]{Stanley}, that the number of
permutations $\pi \in S_p$ with a given descent set $S = (s_1,
\dots ,s_m)$, $1 \le s_1 < \cdots < s_m \le p-1$, can be obtained by
the following inclusion-exclusion formula:
\begin{eqnarray}
b^{(p)}(S) &=& b^{(p)}(s_1, \dots, s_m) \nonumber \\
&=& \sum_{1 \le i_1 < \cdots
< i_j \le m} (-1)^{m-j} \binom{p}{s_{i_1}, s_{i_2}-s_{i_1}, \dots ,
s_{i_j}-s_{i_{j-1}}, p-s_{i_j}} . \label{eq:sieve}
\end{eqnarray}

\begin{thm}\label{th:descent}
The coefficient $\acoef{p}$ given by (\ref{eq:moments}) and
(\ref{eq:fraction}) is equal to the number of permutations
$b^{(p)}(S)$ with descent set $S$ given by (\ref{eq:descset}).
\end{thm}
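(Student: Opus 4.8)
The plan is to show that the permutation counts $b^{(p)}(S)$ satisfy the \emph{same} recursion as the universal coefficients $\acoef{p}$, namely (\ref{eq:firstrec}), and then to finish by induction on $p$ using Lemma \ref{le:firstrec}. The base case is immediate: $a^{(0)}=b^{(0)}=1$ (for $p=0$ the empty permutation has empty descent set), and both sides equal $1$ for $p=1$. For the inductive step, note that the second form of (\ref{eq:firstrec}) writes $\acoef{p}$ as a linear combination of the $\acoef{s_l}$ with $s_l\le p-1$, and that for $l\ge 1$ the truncated multiindex $(j_1,\dots,j_{s_l-1})$ has its ones exactly at the positions $s_1,\dots,s_{l-1}$, that is, it encodes the descent set $S\cap\{1,\dots,s_l-1\}=\{s_1,\dots,s_{l-1}\}$. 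Hence it suffices to prove
\begin{equation}\label{eq:bplan}
b^{(p)}(S)=\sum_{l=0}^{m}\binom{p}{s_l}(-1)^{m-l}\,b^{(s_l)}(\{s_1,\dots,s_{l-1}\}),
\end{equation}
with the conventions $s_0=0$, $b^{(0)}=1$, $\binom{p}{0}=1$; together with the inductive hypothesis $\acoef{s_l}=b^{(s_l)}(\{s_1,\dots,s_{l-1}\})$ this yields $\acoef{p}=b^{(p)}(S)$.

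To prove (\ref{eq:bplan}) I would substitute the inclusion--exclusion formula (\ref{eq:sieve}) for each $b^{(s_l)}$ on the right-hand side. The term $l=0$ gives $(-1)^m$, which is exactly the empty-index-set summand of (\ref{eq:sieve}) for $b^{(p)}(S)$. For $l\ge 1$, a chain $1\le i_1<\dots<i_r\le l-1$ contributes, after being multiplied by $\binom{p}{s_l}(-1)^{m-l}$,
\[
(-1)^{m-l}(-1)^{(l-1)-r}\binom{p}{s_l}\binom{s_l}{s_{i_1},\,s_{i_2}-s_{i_1},\,\dots,\,s_l-s_{i_r}}=(-1)^{m-(r+1)}\binom{p}{s_{i_1},\,s_{i_2}-s_{i_1},\,\dots,\,s_l-s_{i_r},\,p-s_l},
\]
where I use the elementary splitting identity $\binom{p}{s_l}\binom{s_l}{a_1,\dots,a_t}=\binom{p}{a_1,\dots,a_t,\,p-s_l}$ (valid when $a_1+\dots+a_t=s_l$) and collapse the signs. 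Now append $l$ to the chain: the right-hand side above is precisely the summand of (\ref{eq:sieve}) for $b^{(p)}(S)$ indexed by the $(r+1)$-element subset $\{i_1,\dots,i_r,l\}$ of $\{1,\dots,m\}$. Since every nonempty subset of $\{1,\dots,m\}$ has a unique maximum, letting $l$ run over $\{1,\dots,m\}$ and the chain run over all subsets of $\{1,\dots,l-1\}$ produces each such summand exactly once; adding back the $l=0$ term reassembles the whole right-hand side of (\ref{eq:sieve}), which proves (\ref{eq:bplan}).

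I expect the only real difficulty to be the bookkeeping in (\ref{eq:bplan}): one must check carefully that the double sum over pairs (top value $l$, chain of smaller indices) corresponds bijectively, via the ``take the maximum'' map, to the single sum over \emph{all} subsets of $\{1,\dots,m\}$; that the $l=0$ term matches the empty-subset contribution; and that the truncations and the boundary conventions ($s_0=0$, $j_0=1$, $a^{(0)}=b^{(0)}=1$) are consistent. The multinomial manipulation itself is routine. As an alternative, one can bypass Lemma \ref{le:firstrec} altogether: expanding $Y^p=\bigl(\sum_{k\ge 0}\xi_1\cdots\xi_k\bigr)^p$ and taking expectations term by term gives the closed form $e_p=\sum_{S}\beta_p(S)\prod_{s\in S}\mu_s/(1-\mu_s)$, where $\beta_p(S)$ is the number of permutations with descent set contained in $S$ and the sum is over subsets of $\{1,\dots,p\}$; clearing the denominator $(1-\mu_1)\cdots(1-\mu_p)$ and reading off the coefficient of $\mu_1^{j_1}\cdots\mu_{p-1}^{j_{p-1}}$ recovers exactly the sieve sum (\ref{eq:sieve}) for $b^{(p)}(S)$, but this route needs extra care with the interchange of sum and expectation and with the vanishing of the $\mu_p$-terms in the numerator.
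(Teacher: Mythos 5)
Your proposal is correct and follows essentially the same route as the paper: both reduce the theorem to showing that the numbers $b^{(p)}(S)$ satisfy the recursion (\ref{eq:firstrecb}), and both verify it by substituting the sieve formula (\ref{eq:sieve}), combining $\binom{p}{s_l}$ with the inner multinomial coefficient, and identifying the double sum with the single sum over subsets via the ``unique maximum'' correspondence. Your write-up merely makes explicit the induction on $p$ and the sign bookkeeping that the paper leaves implicit.
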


\begin{proof}
It is enough to show that the numbers $b^{(p)}(S)$ satisfy the same
recursion (\ref{eq:firstrec}) as the numbers $\acoef{p}$ do, that is,
\begin{equation} \label{eq:firstrecb}
b^{(p)}(s_1, \dots, s_m) = \sum_{l=0}^m \binom{p}{s_l} (-1)^{m-l}
b^{(s_l)}(s_1, \dots, s_{l-1}) ,
\end{equation}
where, by definition, $s_0 = 0$ and $b^{(s_0)}= b^{(0)} = 1$.

To show this, let us substitute the inclusion-exclusion formula
(\ref{eq:sieve}) into the right hand side of (\ref{eq:firstrecb}):
\begin{eqnarray*}
\lefteqn{\sum_{l=0}^m \binom{p}{s_l} (-1)^{m-l} b^{(s_l)}(s_1,
\dots, s_{l-1})} \\
&=& (-1)^m + \sum_{l=1}^m \binom{p}{s_l} (-1)^{m-l} \sum_{1 \le
i_1 < \cdots < i_j \le l-1} (-1)^{l-1-j} \binom{s_l}{s_{i_1}, s_{i_2}-s_{i_1},
\dots, s_l - s_{i_j}} \\
&=& (-1)^m + \sum_{l=1}^m \sum_{1 \le i_1 < \cdots < i_j < l}
(-1)^{m-j-1} \binom{p}{s_{i_1}, s_{i_2}-s_{i_1}, \dots, s_l-s_{i_j}, p-s_l} \\
&=& (-1)^m + \sum_{1 \le i_1 < \cdots < i_j < l \le m} (-1)^{m-(j+1)}
\binom{p}{s_{i_1}, s_{i_2}-s_{i_1}, \dots, s_l-s_{i_j}, p-s_l} \\
&=& \sum_{1 \le i_1 < \cdots < i_r \le m} (-1)^{m-r} \binom{p}{s_{i_1},
s_{i_2}-s_{i_1}, \dots, s_{i_r}-s_{i_{r-1}}, p-s_{i_r}}
= b^{(p)}(s_1, \dots , s_m) .
\end{eqnarray*}
This proves (\ref{eq:firstrecb}), and so completes the proof.
\end{proof}

Lemma \ref{le:firstrec} described a recursion that uses all
previous rows of Table 1 to compose the elements of a new row. In
the next lemma we show a recursion that uses only the previous row
and which is an analog of the recursion formula in the ordinary
Pascal's triangle: $\binom{p}{k} = \binom{p-1}{k-1} +
\binom{p-1}{k}$ . Interestingly, an application of this identity
is a key step in the following algebraic proof as well. We also
give a simple combinatorial proof which basically translates the
well-known method by which permutations in $S_p$ can be obtained
from permutations in $S_{p-1}$ by adjoining the number $p$.

\begin{lem} \label{le:newrec}
The following recursion holds for any $p \ge 2$ and multiindex
$(j_1, \dots, j_{p-1}) \in \{0, 1\}^{p-1}$:
\begin{equation} \label{eq:newrec}
\acoef{p} =  \sum_{i=1}^{p-1} \delta_i a^{(p-1)}_{j_1^{(i)}
\dots j_{p-2}^{(i)}}
= \sum_{(i_1, \dots, i_{p-2}) \in L(j_1, \dots, j_{p-1})} a_{i_1
\dots i_{p-2}}^{(p-1)},
\end{equation}
where $a^{(1)} = 1$, $\delta_i = |j_i - j_{i-1}|$ for $i \ge 2$,
$\delta_1 = 1$, $j_k^{(i)} = j_k$ for $1 \le k \le i-1$, $j_k^{(i)} =
j_{k+1}$ for $i \le k \le p-2$, and $L(j_1, \dots, j_{p-1})$ is the
set of all distinct binary sequences obtained from $(j_1, \dots,
j_{p-1})$ by deleting exactly one digit. For example, $a_{0110}^{(5)}
= 11 = a_{110}^{(4)} + a_{010}^{(4)} + a_{011}^{(4)}$.
\end{lem}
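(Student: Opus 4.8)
I would prove the recursion \eqref{eq:newrec} by showing that the right‐hand side satisfies the same first‐type recursion \eqref{eq:firstrec} that characterises $\acoef{p}$. Concretely, fix $(j_1,\dots,j_{p-1})$ with descent set $S=\{s_1,\dots,s_m\}$, set $b = a^{(p)}_{j_1\dots j_{p-1}}$ and let $c = \sum_{i=1}^{p-1}\delta_i\, a^{(p-1)}_{j_1^{(i)}\dots j_{p-2}^{(i)}}$ denote the claimed value; I want to verify that $c$ obeys \eqref{eq:firstrec}, which by Lemma~\ref{le:firstrec} forces $c=b$. This reduces the problem to a (finite, combinatorial) identity among binomial coefficients $\binom{p}{k}$, $\binom{p-1}{k}$, and the already‐known values $a^{(k)}$ for $k<p$, with the identity $\binom{p}{k}=\binom{p-1}{k-1}+\binom{p-1}{k}$ as the bridge between the ``$p$'' world and the ``$p-1$'' world, exactly as the prose announces.

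Here is the order of steps. First I set up the bookkeeping: for each deletion position $i\in\{1,\dots,p-1\}$, describe explicitly the descent set $S^{(i)}\subseteq\{1,\dots,p-2\}$ of the shortened multiindex $(j_1^{(i)},\dots,j_{p-2}^{(i)})$ in terms of $S$ — deleting digit $i$ either merges, splits, or shifts the blocks of consecutive $1$'s, and $\delta_i=|j_i-j_{i-1}|$ is exactly the indicator that deletion at $i$ happens ``at a boundary'' so that the shortened index is genuinely new. Second, I expand $a^{(p-1)}_{j_1^{(i)}\dots j_{p-2}^{(i)}}$ using the second form of \eqref{eq:firstrec} (the $\binom{p-1}{s_l'}(-1)^{m'-l}a^{(s_l')}$ sum over the shortened descent set $S^{(i)}$). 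Third — the computational heart — I sum over $i$ with weights $\delta_i$, use $\binom{p}{k}=\binom{p-1}{k-1}+\binom{p-1}{k}$ to collect $\binom{p-1}{s_l-1}$ and $\binom{p-1}{s_l}$ contributions coming from different deletion positions into a single $\binom{p}{s_l}$, and check that the signs $(-1)^{m-l}$ and the lower‐order coefficients $a^{(s_l)}$ reassemble into the right side of \eqref{eq:firstrec} for $(j_1,\dots,j_{p-1})$. Finally, I invoke Lemma~\ref{le:firstrec} to conclude $c=\acoef{p}$, and separately note that the $\delta_i$-weighted sum equals the multiset sum over $L(j_1,\dots,j_{p-1})$ simply because $\delta_i=1$ precisely on the positions giving distinct shortened sequences (and distinct positions with $\delta_i=1$ give distinct sequences).

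For the combinatorial proof I would argue directly on permutations via Theorem~\ref{th:descent}, which lets me read $\acoef{p}$ as $b^{(p)}(S)$. Every $\pi\in S_p$ with descent set $S$ is obtained uniquely from some $\sigma\in S_{p-1}$ by inserting the value $p$ into one of the $p-1$ ``gaps'' of $\sigma$ (positions $1$ through $p-1$, i.e. before, between, or after entries, excluding the slot that would put $p$ at the very end is not excluded — one must be careful here); inserting $p$ at gap $i$ turns position $i-1$ into a non‐descent (since the new entry there is the maximum... actually $p$ creates a descent to its right and kills a descent to its left, depending on placement), so the resulting descent set is exactly one of the shortened patterns, and the patterns that arise as $i$ ranges over all gaps are precisely those in $L(j_1,\dots,j_{p-1})$, each with multiplicity equal to the number of gaps producing it. I would then check this multiplicity equals $\delta_i$ summed appropriately — or, cleaner, that summing $b^{(p-1)}(S^{(i)})$ over all $p-1$ gaps with the correct multiplicities gives $b^{(p)}(S)$, which is just the statement ``count $\pi$ by where $p$ sits''.

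The main obstacle I expect is the sign‐and‐binomial bookkeeping in step three of the algebraic proof: the shortened descent sets $S^{(i)}$ have varying cardinality $m'$ (which is $m$, $m-1$, or $m+1$ depending on whether deletion occurs inside a block, at a block boundary merging two blocks, or splitting — no, deletion can't split, but it can shorten a block to length zero), so the sign exponents $m'-l$ must be tracked against $m-l$, and the telescoping that turns $\binom{p-1}{s_l-1}+\binom{p-1}{s_l}$ into $\binom{p}{s_l}$ must be matched up across the right pairs of deletion positions. Getting the edge cases right — deletion at $i=1$, deletion adjacent to the ends, the convention $j_0=1$, $s_0=0$, $a^{(0)}=1$, and empty‑sum sign conventions — is where a clean proof lives or dies; I would handle these by carefully restating \eqref{eq:firstrec} with all conventions in force before starting the manipulation.
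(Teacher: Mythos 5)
Your overall plan follows the paper's proof quite closely: the paper also proves the second equality of \eqref{eq:newrec} by the run argument (deleting any digit of a run gives the same sequence, and $\delta_i$ selects one representative per run), and it also gives both an algebraic proof resting on Lemma~\ref{le:firstrec} plus $\binom{p}{k}=\binom{p-1}{k-1}+\binom{p-1}{k}$ and a combinatorial proof via adjoining the value $p$. Your combinatorial sketch is the paper's bijection run in the opposite direction (classify $\pi\in S_p$ by the position of $p$ rather than exhibit the unique insertion for each deletion); it is hedged and you have not yet pinned down which insertions are legal, but the idea is the correct one and would carry the lemma on its own once made precise.

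The genuine gap is in the algebraic route: as described, step three cannot close without an induction on $p$, i.e.\ without assuming the lemma itself for all shorter multiindices, and you never invoke such a hypothesis. When you expand $a^{(p-1)}_{j_1^{(i)}\dots j_{p-2}^{(i)}}$ by \eqref{eq:firstrec}, the lower-order coefficients that appear for a deletion position $i\le s_l$ are not $\acoef{s_l}$ but $a^{(s_l-1)}$ indexed by the \emph{deleted} prefix $(j_1^{(i)},\dots,j_{s_l-2}^{(i)})$; so the $\binom{p-1}{s_l-1}$ contributions do not sit next to a common factor $\acoef{s_l}$ that Pascal's identity could simply absorb. To convert $\sum_{i\le s_l}\delta_i\, a^{(s_l-1)}_{j_1^{(i)}\dots j_{s_l-2}^{(i)}}$ into $a^{(s_l)}_{j_1\dots j_{s_l-1}}$ you need exactly the statement of Lemma~\ref{le:newrec} at the smaller length $s_l$ --- this is the induction hypothesis the paper uses explicitly (together with the telescoping identity $j_k\sum_{i=k+1}^{p-1}\delta_i(-1)^{j_i}=j_k(1-j_{p-1})$ to handle the sign bookkeeping you correctly identify as the danger point). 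Without adding that induction (or replacing the one-step expansion by the full sieve formula \eqref{eq:sieve}, which would be a different proof), the ``reassembly into the right side of \eqref{eq:firstrec}'' you describe would fail.
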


\begin{proof}
First we prove the second equality in (\ref{eq:newrec}). For this
it is enough to show that if the same binary sequence is obtained
from $(j_1, \dots, j_{p-1})$ when eliminating either the $k$th or
the $l$th digit ($k < l$), then all digits between the $k$th and
$l$th (including these two) are uniformly either $0$'s or $1$'s (a
run of $0$'s or $1$'s). Therefore, the two recursions given in
(\ref{eq:newrec}) are the same.

Consider a multiindex $(j_1, \dots, j_{k-1}, j_k, \dots, j_l,
j_{l+1}, \dots, j_{p-1}) \in \{0, 1\}^{p-1}$. Suppose that we get the
same binary sequence by deleting $j_k$ and $j_l$, respectively:
$(j_1, \dots, j_{k-1}, j_{k+1}, \dots, j_l, j_{l+1}, \dots, j_{p-1})
= (j_1, \dots, j_{k-1}, j_k, \dots, j_{l-1}, j_{l+1}, \dots,
j_{p-1})$.  Then $j_k = j_{k+1} = \cdots = j_{l-1} = j_l$, so the
second equality in (\ref{eq:firstrec}) really holds.

Now it remains to show the first equality in (\ref{eq:newrec}), that
is, the recursion itself.

\emph{A combinatorial proof of the recursion.} Given a binary
sequence $(j_1, \dots, j_{p-1})$, let us remove a single $1$ from
a run of $1$'s or a single $0$ from a run of $0$'s. Count the
number of permutations in $S_{p-1}$ determined by the resulting
multiindex $(i_1, \dots, i_{p-2})$. This number is $a_{i_1 \dots
i_{p-2}}^{(p-1)}$ by Theorem \ref{th:descent}. We want to show
that there is a uniquely determined adjoining of the number $p$ to
any such permutation from $S_{p-1}$ to obtain a  permutation from
$S_p$ corresponding to the original multindex $(j_1, \dots,
j_{p-1})$.

If a $0$ was deleted from a run of $0$'s, the number $p$ should be
inserted right after the number at the position of the first $1$
following the affected run of $0$'s. (If the given run happens to
be the last, $p$ is inserted as the last number.) When a $1$ was
deleted from a run of $1$'s, the number $p$ should be inserted
right after the number at the position of the last $0$ preceding
the affected run of $1$'s. (If the given run happens to be the
first, $p$ is inserted as the first number.) Since these
insertions are the only ones that reconstruct the original descent
set, the recursion is proved.

\emph{An algebraic proof of the recursion.}
We are going to proceed by induction over
$p$. Thus suppose that the recursion holds for all multiindices of
lengths smaller than $p-1$.  First we use the recursion of Lemma
\ref{le:firstrec} for the terms in the right side of
(\ref{eq:newrec}), then we change the order of summation to obtain
\begin{eqnarray*}
\sum_{i=1}^{p-1} \delta_i a^{(p-1)}_{j_1^{(i)} \dots
j_{p-2}^{(i)}}
&=& \sum_{i=1}^{p-1} \delta_i \sum_{k=0}^{p-2}
\binom{p-1}{k} j_k^{(i)} (-1)^{j_{k+1}^{(i)} + \cdots
+ j_{p-2}^{(i)}} a^{(k)}_{j_1^{(i)} \dots j_{k-1}^{(i)}} \\
&=& \sum_{k=0}^{p-2} \binom{p-1}{k} \left\{ j_{k+1} (-1)^{j_{k+2}
+ \cdots + j_{p-1}} \sum_{i=1}^k \delta_i a^{(k)}_{j_1^{(i)}
\dots j_{k-1}^{(i)}}  \right. \\
&& \left. + j_k (-1)^{j_{k+1} + \cdots + j_{p-1}} \acoef{k}
\sum_{i=k+1}^{p-1} \delta_i (-1)^{j_i} \right\} .
\end{eqnarray*}

Here in the last expression, one may use the induction hypothesis
for the first sum. In the second sum observe that $\delta_i
(-1)^{j_i}$ is $1$ if $j_{i-1} = 1$ and $j_i = 0$, it is $-1$ if
$j_{i-1} = 0$ and $j_i = 1$, and it equals $0$ otherwise. Hence we
get the identity $j_k \sum_{i=k+1}^{p-1} \delta_i (-1)^{j_i} = j_k
(1-j_{p-1})$. Thus one obtains that
\begin{eqnarray*}
\lefteqn{\sum_{i=1}^{p-1} \delta_i a^{(p-1)}_{j_1^{(i)} \dots
j_{p-2}^{(i)}}
= \sum_{k=0}^{p-2} \left\{ \binom{p-1}{k} j_{k+1} (-1)^{j_{k+2}
+ \cdots + j_{p-1}} a^{(k+1)}_{j_1 \dots j_k} \right.} \\
&& \qquad \qquad \qquad \left. + \binom{p-1}{k} (1-j_{p-1})
j_k (-1)^{j_{k+1} + \cdots + j_{p-1}} \acoef{k} \right\}  \\
&=& \sum_{k=1}^{p-2} \left\{ \binom{p-1}{k-1} + \binom{p-1}{k}
(1-j_{p-1}) \right\} j_k (-1)^{j_{k+1} + \cdots + j_{p-1}} \acoef{k} \\
&& + (1-j_{p-1}) (-1)^{j_1 + \cdots + j_{p-1}} + \binom{p-1}{p-2}
j_{p-1} a^{(p-1)}_{j_1 \dots j_{p-2}} .
\end{eqnarray*}
To rewrite the terms above we use recursion (\ref{eq:firstrec}) in
the following case:
\[
a^{(p-1)}_{j_1\dots j_{p-2}} = \sum_{k=0}^{p-2} \binom{p-1}{k} j_k
(-1)^{j_{k+1} + \cdots + j_{p-2}} \acoef{k} ,
\]
plus the identity $-j_{p-1} (-1)^{j_{p-1}} = j_{p-1}$, and the
recursion for binomial coefficients:
\begin{eqnarray*}
\sum_{i=1}^{p-1} \delta_i a^{(p-1)}_{j_1^{(i)} \dots
j_{p-2}^{(i)}} &=& \sum_{k=1}^{p-2} \left\{ \binom{p-1}{k-1} +
\binom{p-1}{k} \right\} j_k (-1)^{j_{k+1} + \cdots + j_{p-1}} \acoef{k} \\
&& -j_{p-1} (-1)^{j_{p-1}} \left( a^{(p-1)}_{j_1 \dots j_{p-2}} -
(-1)^{j_1 + \cdots + j_{p-2}} \right) \\ && + (1-j_{p-1}) (-1)^{j_1 +
\cdots + j_{p-1}} + \binom{p-1}{p-2}
j_{p-1} a^{(p-1)}_{j_1 \dots j_{p-2}} \\ &=& \sum_{k=0}^{p-1}
\binom{p}{k} j_k (-1)^{j_{k+1} + \cdots + j_{p-1}} \acoef{k} = \acoef{p} .
\end{eqnarray*}
This completes the proof.
\end{proof}

The results above imply that Table 1 has properties analogous to the
ones of Pascal's triangle: each entry $\acoef{p}$ is a positive
integer, the first and the last entries, $a^{(p)}_{0 \dots 0}$ and
$a^{(p)}_{1 \dots 1}$ are $1$, the table has symmetries $\acoef{p} =
a^{(p)}_{j_{p-1} \dots j_1}$ and $\acoef{p} = a^{(p)}_{1-j_1 \dots
1-j_{p-1}}$, and the sum of the $2^{p-1}$ entries in the $p$th row is
$p!$.

\section{Some remarks}

We mention that beside the indirect recursion (\ref{eq:moments}),
(\ref{eq:fraction}), direct recursions (\ref{eq:firstrec}) and
(\ref{eq:newrec}), and sieve formula (\ref{eq:sieve}), there are
other methods as well in the literature that can be used for
evaluating the coefficients $\acoef{p}$. Here we mention two of them.
Zabrocki \cite{Zabrocki} uses the following rather fast and practical
`splitting' recursion:
\[
\acoef{p} = \sum_{\{k: j_{k-1} = 0, j_k = 1, 1 \le k \le p \}}
\binom{p-1}{k-1} a^{(k-1)}_{j_1 \dots j_{k-2}} \: a^{(p-k)}_{j_{k+1}
\dots j_{p-1}} ,
\]
where, by definition, $j_0 = 0$ and $j_p = 1$. Its verification is
simple: this recursion divides the set of permutations in $S_p$
with descent set defined by the $1$'s in $(j_1, \dots, j_{p-1})$
into disjoint subsets, where the largest number $p$ is at the
different local maxima $k$ where $j_{k-1} = 0$ and $j_k = 1$. Then
there are $\binom{p-1}{k-1}$ ways to pick the numbers to precede
the largest, with $a^{(k-1)}_{j_1 \dots j_{k-2}}$ permutations,
and $a^{(p-k)}_{j_{k+1} \dots j_{p-1}}$ permutations for the
numbers succeeding the largest, fitting the given descent set.

Another method (of mostly theoretical interest) can be based on
MacMahon's determinant, cf. \cite[p. 69]{Stanley}:
\[
\acoef{p} = b^{(p)}(s_1, \dots, s_m) = p! \det\left[1/(s_{j+1} - s_i)!
\right], \qquad (i, j) \in [0, m] \times [0, m] .,
\]
where the descent set $S = (s_1, \dots, s_m)$ is defined by
(\ref{eq:descset}), $s_0 = 0$, and $s_{m+1} = p$. This gives the
recursion
\[
b^{(p)}(s_1, \dots, s_m) = \frac{1}{s_1!} b^{(p)}(s_2 - s_1, \dots,
s_m - s_1) - \frac{1}{s_2!} b^{(p)}(s_3 - s_2, \dots, s_m - s_2) .
\]

Obviously, the coefficients $\acoef{p}$ are closely related to other
important classifications of permutations as well. The Eulerian
number $A(p, k)$ which counts the permutations in $S_p$ having exactly
$k-1$ descents (that is, exactly
$k$ runs), where $p \ge 1, 1 \le k \le p$, can be written as
\[
A(p, k) = \sum_{\{(j_1, \dots, j_{p-1}) \in \{0, 1\}^{p-1} :
\sum_{i=1}^{p-1} j_i = k-1\}} \acoef{p}  .
\]

Also, let $I(p, k)$ be the number of permutations in $S_p$ with $k$
inversions $(p \ge 1, 0 \le k \le \binom{p}{2})$. By MacMahon's theorem,
cf. \cite[Section 5.1.1]{Knuth}, $I(p, k)$ is equal to the number of
permutations in $S_p$ with major index $k$, thus
\[
I(p, k) = \sum_{\{(j_1, \dots, j_{p-1}) \in \{0, 1\}^{p-1} :
\sum_{i=1}^{p-1} i j_i = k \}} \acoef{p}  .
\]

Finally, let us express the generating function of the coefficients
$\acoef{p}$ by the help of a suitable recursive sequence
$(e_p)_{p=1}^{\infty}$ given by (\ref{eq:moments}). First, one can
assign a positive integer $n$ to any $p \ge 1$ and multiindex
$(j_1, \dots, j_{p-1})$ by the equation
$n = 2^{p-1} + \sum_{k=1}^{p-1} j_k 2^{k-1}$ ,
in a one-to-one way. Then let us introduce the notation $\alpha_n =
\acoef{p}$, $n \ge 1$. This way the coefficients are arranged in
a single sequence.

Second, take the variables (moments) $\mu_k = x^{2^{k-1}}$ for $k \ge
1$, $x \ne 1$, and let $\mu_0 = 1$. Then define the sequence
$(e_p(x))_{p=1}^{\infty}$ by (\ref{eq:moments}):
\[
e_p(x) = \frac{1}{1-x^{2^{p-1}}} \left\{ 1 + \sum_{k=1}^{p-1}
\binom{p}{k} x^{2^{k-1}} \: e_k(x) \right\} .
\]
Also, using (\ref{eq:fraction}) and the definition of $n$ one
obtains that
\[
x^{2^{p-1}} e_p(x) = \frac{1}{(1-x)(1-x^2)(1-x^4) \cdots (1-x^{2^{p-1}})}
\sum_{n=2^{p-1}}^{2^p - 1} \alpha_n x^n .
\]
Hence the generating function of the sequence $(\alpha_n)_{n=1}^{\infty}$
can be expressed as
\[
\sum_{n=1}^{\infty} \alpha_n x^n = \sum_{p=1}^{\infty} x^{2^{p-1}}
(1-x)(1-x^2)(1-x^4) \cdots (1-x^{2^{p-1}}) e_p(x) .
\]




\begin{thebibliography}{9}

\bibitem{Knuth} Knuth, D.E. (1973) The Art of Computer Programming.
Vol. 3. Sorting and searching. Addison-Wesley, Reading, Mass.

\bibitem{MacMahon} MacMahon, P.A. (1915) Combinatory Analysis.
Vol. 1. Cambridge Univ. Press, Cambridge, England.

\bibitem{Stanley} Stanley, R. (1986) Enumerative Combinatorics.
Vol. 1. Wadsworth and Brooks/Cole Mathematics Series, Monterey, Calif.

\bibitem{expfun} Szabados, T. and Sz\'ekely, B. (2003) An exponential
functional of random walks. \emph{J. Appl. Prob.} to appear.

\bibitem{Zabrocki} Zabrocki, M. (2001) Integer sequence A060351.
\emph{On-line encyclopedia of integer sequences.} \quad
http://www.research.att.com/~njas/sequences/Seis.html

\end{thebibliography}
\end{document}